\theoremstyle{plain}
\newtheorem{theorem}{Theorem}[section]
\newtheorem{proposition}[theorem]{Proposition}
\newtheorem{corollary}[theorem]{Corollary}
\newtheorem{def-thm}[theorem]{Definition-Theorem}
\theoremstyle{definition}
\newtheorem{remark}[theorem]{Remark}
\newtheorem*{acknowledgement}{Acknowledgement}
\newcommand{\PP}{\mathbb{P}}
\newcommand{\RR}{\mathbb{R}}
\newcommand{\CC}{\mathbb{C}}
\newcommand{\OO}{{\mathcal O}}
\newcommand{\VV}{{\mathcal V}}
\newcommand{\rr}{{r^+_M}}
\DeclareMathOperator{\spann}{span}
\DeclareMathOperator{\Hom}{Hom}
\DeclareMathOperator{\tr}{tr}
\DeclareMathOperator{\PEff}{PEff}
\begin{document}

\title[On projective K\"ahler manifolds of partially positive curvature]{On projective K\"ahler manifolds of partially positive curvature and rational connectedness}

\begin{abstract} 
In a previous paper, we proved that a projective K\"ahler manifold of positive total scalar curvature is uniruled. At the other end of the spectrum, it is a well-known theorem of Campana and Koll\'ar-Miyaoka-Mori that a projective K\"ahler manifold of positive Ricci curvature is rationally connected. In the present work, we investigate the intermediate notion of $k$-positive Ricci curvature and prove that for a projective $n$-dimensional K\"ahler manifold of $k$-positive Ricci curvature the MRC fibration has generic fibers of dimension at least $n-k+1$. We also establish an analogous result for projective K\"ahler manifolds of semi-positive holomorphic sectional curvature based on an invariant which records the largest codimension of maximal subspaces in the tangent spaces on which the holomorphic sectional curvature vanishes. In particular, the latter result confirms a conjecture of S.-T. Yau in the projective case.
\end{abstract}

\author{Gordon Heier} \author{Bun Wong}

\address{Department of Mathematics\\University of Houston\\4800 Calhoun Road, Houston, TX 77204\\USA}
\email{heier@math.uh.edu}

\address{Department of Mathematics\\UC Riverside\\900 University Avenue\\ Riverside, CA 92521\\USA}
\email{wong@math.ucr.edu}

\subjclass[2010]{14J10, 14J32, 14M20, 32Q10}
\keywords{Complex projective manifolds, K\"ahler metrics, positive holomorphic sectional curvature, $k$-positive Ricci curvature, rational curves, uniruledness, rational connectedness}

\thanks{The first author is partially supported by the National Security Agency under Grant Number H98230-12-1-0235. The United States Government is authorized to reproduce and distribute reprints notwithstanding any copyright notation herein.}

\maketitle

\section{Introduction and statement of the results} 
In our previous work \cite{heier_wong_cag}, we showed that an $n$-dimensional complex projective K\"ahler manifold $M$ of positive total scalar curvature is {\it uniruled}, which means that there exists a dominant rational map from $\PP^1\times N$ onto $M$, where $N$ is a complex projective variety of dimension $n-1$. Recall that being uniruled is the same as having a rational curve passing through every point. Our proof was based on the important equivalence, established by Boucksom-Demailly-Peternell-P\u aun in \cite{BDPP_13}, of uniruledness to the property that the canonical line bundle of $M$ is not pseudo-effective.\par
On the other hand, it is a well-known result of Campana \cite{campana_con_rat} and Koll\'ar-Miyaoka-Mori \cite{k_m_m_jag} that a projective K\"ahler manifold of positive Ricci curvature (aka a Fano manifold) is {\it rationally connected}, i.e., any two points can be joined by a chain of rational curves or, equivalently, by a single rational curve. In light of all of this, we establish two main theorems in the case of partially positive curvature which are natural extensions of the previous results. Throughout the paper, we shall work over the field of complex numbers $\CC$.\par

The main kinds of partially positive curvatures which we will consider are $k$-positive Ricci curvature and semi-positive holomorphic sectional curvature with a certain amount of positivity captured by the numerical invariant $r_M^+$. For the definitions, we refer to Section \ref{diff_alg_geom_backgr}. Our first main theorem is the following.

\begin{theorem}\label{mthm_k_ricci} Let $M$ be a projective manifold with a K\"ahler metric with $k$-positive Ricci curvature, where $k\in \{1,\ldots,n:=\dim M\}$. Then a generic fiber of the MRC fibration of $M$ has dimension at least $n-k+1$.
\end{theorem}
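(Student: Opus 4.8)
The plan is to bound the dimension of the base of the MRC fibration rather than the fiber dimension directly. Let $\pi\colon M\dashrightarrow Z$ be the MRC fibration, so that its generic fiber is rationally connected of dimension $n-\dim Z$; the desired inequality is equivalent to $\dim Z\le k-1$. By the theorem of Graber--Harris--Starr the base $Z$ is not uniruled, and hence, by the Boucksom--Demailly--Peternell--P\u aun characterization already used in \cite{heier_wong_cag,BDPP_13}, the canonical class $K_Z$ is pseudo-effective. In particular $K_Z\cdot\beta\ge 0$ for every movable curve class $\beta$ on $Z$. I would argue by contradiction, assuming $d:=\dim Z\ge k$ and producing a movable curve class $\beta$ with $K_Z\cdot\beta<0$.

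First I would work over the Zariski-open locus $M_0\subseteq M$ on which $\pi$ restricts to a submersion onto an open subset of a smooth model of $Z$. There the relative tangent sheaf gives an exact sequence $0\to T_{M_0/Z}\to T_{M}|_{M_0}\to \mathcal Q\to 0$ with $\mathcal Q\cong\pi^*T_Z$ of rank $d$, so that $\det\mathcal Q\cong\pi^*(-K_Z)$. I would equip $\mathcal Q$ with the quotient of the given K\"ahler metric $\omega$. Since quotient metrics only increase curvature, the first Chern form satisfies $c_1(\mathcal Q)\ge \tr_{\mathcal Q}\Theta_{T_M}$ pointwise, where $\tr_{\mathcal Q}\Theta_{T_M}$ is the partial trace of the Chern curvature of $(T_M,\omega)$ over the horizontal directions. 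The key computation is then
\[
\int_{M_0} \tr_{\mathcal Q}\Theta_{T_M}\wedge\omega^{n-1}\;=\;\frac1n\int_{M_0}\tr\!\big(\rho|_{H}\big)\,\omega^{n},
\]
where $\rho$ is the Ricci form, $H\cong\mathcal Q$ is the horizontal $d$-plane at each point, and $\tr(\rho|_H)$ is the trace of the Ricci tensor over $H$; this identity uses the K\"ahler symmetry $R_{i\bar i k\bar k}=R_{k\bar k i\bar i}$ to convert the partial trace over $d$ directions (wedged against $\omega^{n-1}$) into a full Ricci trace over those same $d$ directions.

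The pointwise positivity now comes from $k$-positivity together with $d\ge k$. If $\lambda_1\le\cdots\le\lambda_n$ are the eigenvalues of the Ricci tensor, then $\lambda_1+\cdots+\lambda_k>0$ forces $\lambda_k>0$ and hence $\lambda_j>0$ for all $j\ge k$; by the Ky Fan minimum principle $\tr(\rho|_H)\ge\lambda_1+\cdots+\lambda_d=(\lambda_1+\cdots+\lambda_k)+(\lambda_{k+1}+\cdots+\lambda_d)>0$. Thus the right-hand side above is strictly positive, so $\int c_1(\mathcal Q)\wedge\omega^{n-1}>0$. On the other hand $c_1(\mathcal Q)=\pi^*(-K_Z)$, so by the projection formula this integral equals $(-K_Z)\cdot\pi_*(\omega^{n-1})$, and $\beta:=\pi_*(\omega^{n-1})$ is a movable curve class on $Z$. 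This yields $K_Z\cdot\beta<0$, contradicting the pseudo-effectivity of $K_Z$ and completing the proof.

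The main obstacle is the analytic bookkeeping across $M\setminus M_0$: the quotient $\mathcal Q$, its metric, and the forms above are a priori defined only on the open locus $M_0$, so I must justify that integrating over $M_0$ computes the honest intersection numbers on a smooth birational model $\widehat M$ of $M$ mapping to $Z$, with no loss of positivity from the indeterminacy and non-submersion loci. I expect to handle this either by pulling everything back to $\widehat M$ and comparing $c_1(\mathcal Q)$ with $\widehat\pi^{\,*}(-K_Z)$ up to an effective current supported on the exceptional and degeneracy loci, or by a direct estimate showing that the quotient-metric curvature current extends across $M\setminus M_0$ without negative mass; the inequality $c_1(\mathcal Q)\ge\tr_{\mathcal Q}\Theta_{T_M}$ is robust enough that such boundary terms should only help.
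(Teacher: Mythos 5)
Your proposal is correct and follows essentially the same route as the paper: Graber--Harris--Starr plus Boucksom--Demailly--Peternell--P\u aun to make $K_Z$ pseudo-effective, the quotient-metric curvature-increasing property on $\pi^*T_Z$, the K\"ahler symmetry and trace formula to reduce to a partial Ricci trace over the horizontal directions, and the Ky Fan minimum principle combined with $k$-positivity to get strict pointwise positivity. The one step you leave open --- the bookkeeping across the indeterminacy and non-submersion loci --- is handled in the paper exactly as in the first alternative you sketch: after resolving, the discrepancy between the smooth reference metric on $\det E$ and the quotient metric is computed via the Poincar\'e--Lelong equation to be an effective (hence non-negative) contribution supported on the degeneracy divisors, so no positivity is lost.
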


\begin{remark}
The case $k=1$ in Theorem \ref{mthm_k_ricci} represents the above-mentioned well-known result of Campana and Koll\'ar-Miyaoka-Mori. The case $k=n$ is the case of positive scalar curvature, which was handled in \cite{heier_wong_cag} (under the even weaker assumption of positive {\it total} scalar curvature).
\end{remark}

We remark that  at the end of the proof of Theorem \ref{mthm_k_ricci}, the symbol $\varepsilon$ merely has to denote a semi-positive continuous function which is positive at at least one point in order for the proof to go through verbatim. Thus, Theorem \ref{mthm_k_ricci} can immediately be generalized to the following theorem.

\begin{theorem}\label{mthm_semi_k_ricci} Let $M$ be a projective manifold with a K\"ahler metric with $k$-semi-positive Ricci curvature, where $k\in \{1,\ldots,n:=\dim M\}$. Assume that that there exists at least one point of $M$ at which the K\"ahler metric has $k$-positive Ricci curvature. Then a generic fiber of the MRC fibration of $M$ has dimension at least $n-k+1$.
\end{theorem}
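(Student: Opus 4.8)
The plan is to run the same argument that underlies Theorem~\ref{mthm_k_ricci} and to isolate the one spot where the weaker hypothesis is used. Write $\phi\colon M\dra Z$ for the MRC fibration, with $Z$ a smooth projective model of the base, and set $r:=\dim Z$. Since a generic fiber has dimension $n-r$, the claim is equivalent to $r\le k-1$, so I would assume $r\ge k$ and derive a contradiction. The base of the MRC fibration is not uniruled (a standard consequence of the work of Graber--Harris--Starr), so by Boucksom--Demailly--Peternell--P\u aun the canonical bundle $K_Z$ is pseudo-effective. The indeterminacy locus of $\phi$ has codimension at least $2$, so over the complementary open set $U$ the map is a morphism and $\phi^\ast\Omega^1_Z\to\Omega^1_U$ is a rank-$r$ subsheaf; saturating it in $\Omega^1_U$ and extending reflexively across the codimension-$\ge 2$ locus produces a saturated subsheaf $F\subseteq\Omega^1_M$ of rank $r$. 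Its determinant $\det F=\calo_M(D)$ agrees on $U$ with $\phi^\ast K_Z$ up to an effective divisor, and extending the corresponding positive current across the codimension-$\ge 2$ complement shows that $D$ is pseudo-effective. Since $[\omega]$ is nef, this gives the numerical inequality $D\cdot[\omega]^{n-1}\ge 0$.

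The second step is a curvature estimate bounding this intersection number from above. I would equip $F$ with the Hermitian metric $h$ induced by $\omega$ from $\Omega^1_M$. On the open locus where $F$ is a subbundle, the Griffiths curvature-decreasing property for subbundles bounds $\tr_\omega c_1(\det F,h)$ from above by the full $\omega$-trace of the partial trace over $F_x$ of the Chern curvature of $\Omega^1_M$. Using $\Omega^1_M=T_M^\ast$ together with $\tr_\omega\Theta(T_M)=\Ric$, this upper bound equals $-\sum_{\alpha=1}^{r}\Ric(\epsilon_\alpha,\overline{\epsilon_\alpha})$, where $\{\epsilon_\alpha\}$ is the orthonormal frame of the rank-$r$ subspace $\Sigma_x\subseteq T_xM$ metric-dual to an orthonormal frame of $F_x$. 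Integrating against $\omega^{n-1}$ then yields, up to a positive constant,
\[
D\cdot[\omega]^{n-1}\ \le\ -\int_M\Big(\sum_{\alpha=1}^{r}\Ric(\epsilon_\alpha,\overline{\epsilon_\alpha})\Big)\,\frac{\omega^n}{n!}.
\]

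This is exactly the place the preceding remark points to. A one-line eigenvalue estimate shows that $k$-semi-positivity of the Ricci curvature --- nonnegativity of the sum of the smallest $k$ eigenvalues --- forces the sum of the smallest $r$ eigenvalues to be nonnegative for every $r\ge k$, with strict positivity at any point where the Ricci curvature is $k$-positive. Hence the continuous function $x\mapsto(\text{sum of the }r\text{ smallest eigenvalues of }\Ric_x)$ is nonnegative on all of $M$, is strictly positive at the distinguished point, and bounds the integrand $\sum_\alpha\Ric(\epsilon_\alpha,\overline{\epsilon_\alpha})$ from below pointwise. By continuity it is positive on a whole neighborhood of the distinguished point, so the integral is strictly positive and the right-hand side above is strictly negative. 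This contradicts $D\cdot[\omega]^{n-1}\ge 0$ and forces $r\le k-1$. The only difference from the proof of Theorem~\ref{mthm_k_ricci} is that ``strictly positive everywhere'' is replaced by ``nonnegative everywhere and strictly positive at one point'', which changes nothing but this final continuity argument; this is why that proof transfers verbatim.

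I expect the main obstacle to be the singularities rather than the curvature idea. One must verify that replacing the rational map $\phi$ by an honest saturated subsheaf of $\Omega^1_M$, and extending reflexively across the codimension-$\ge 2$ indeterminacy locus, preserves both the rank and the pseudo-effectivity of the determinant; and that the pointwise Griffiths inequality and the ensuing integration are unaffected by the analytic singular set of $F$, which is pluripolar and hence negligible for the integral against $\omega^n$. Bounding the integrand from below by the globally continuous ``sum of the $r$ smallest eigenvalues of $\Ric$'' is what lets me bypass any regularity issue of the frame $\{\epsilon_\alpha\}$ near that set and still conclude the strict sign of the integral.
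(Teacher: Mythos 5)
Your argument is correct and follows the same skeleton as the paper's: reduce to a sign contradiction for an intersection number $c_1(\cdot)\cdot[\omega]^{n-1}$, with one sign coming from pseudo-effectivity of $K_Z$ (Graber--Harris--Starr plus Boucksom--Demailly--Peternell--P\u aun) and the other from a curvature monotonicity combined with the extremal partial trace bound for the eigenvalues of the Ricci endomorphism; the passage from $k$-positive to $k$-semi-positive with one point of $k$-positivity is, exactly as you say, only the final continuity step, which is precisely how the paper derives this theorem from Theorem \ref{mthm_k_ricci}. The one genuine difference is that you dualize: you work with the saturation $F$ of $\phi^*\Omega^1_Z$ inside $\Omega^1_M$ and the Griffiths curvature-decreasing property for subbundles, whereas the paper works with the reflexive extension $E$ of $f^*TZ$ as a quotient of $TM$ and the curvature-increasing property. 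Your choice buys something concrete: since $F$ is saturated, $\Omega^1_M/F$ is torsion-free, hence $F$ is a subbundle of $\Omega^1_M$ outside a set of codimension at least two, and the divisorial degeneracy of $d\phi$ is absorbed into the effective divisor by which $\det F$ exceeds $\phi^*K_Z$; in the paper's setup that degeneracy survives as a singularity of the induced quotient metric along a divisor, and the resulting correction term $\int\partial\bar\partial\log q\wedge\omega^{n-1}$ must be evaluated by Poincar\'e--Lelong and shown to be a non-negative sum of divisor volumes. One caution: your justification that the singular set of $F$ is harmless because it is ``pluripolar and hence negligible for the integral against $\omega^n$'' aims at the wrong target. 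What must be controlled is not the measure of the singular set but the $\partial\bar\partial$-correction between the cohomological quantity $D\cdot[\omega]^{n-1}$ and the integral of the smooth Chern form of the induced metric over the regular locus; this correction is a current supported on the singular set and can carry mass even though that set is pluripolar. It vanishes here precisely because the singular set has codimension at least two (the mechanism behind equality \eqref{codim_2_gives_0} in the paper), which is exactly what the saturation buys you --- so the step is sound, but for that reason rather than the one you gave.
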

Since it might be of independent interest, we would also like to point out that the proof of Theorem \ref{mthm_k_ricci} immediately yields the following corollary.
\begin{corollary}
Let $M$ be a projective manifold with a K\"ahler metric with $k$-semi-positive Ricci curvature, where $k\in \{1,\ldots, \dim M\}$. Assume that that there exists at least one point of $M$ at which the K\"ahler metric has $k$-positive Ricci curvature. Let $Z$ be a projective $k$-dimensional manifold with pseudo-effective canonical line bundle. Then there does not exist a dominant rational map from $M$ to $Z$.
\end{corollary}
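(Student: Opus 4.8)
The plan is to argue by contradiction. Suppose there exists a dominant rational map $\phi \colon M \dra Z$. Since $Z$ is projective with pseudo-effective canonical line bundle, the theorem of Boucksom-Demailly-Peternell-P\u aun \cite{BDPP_13} guarantees that $Z$ is not uniruled. I will combine this non-uniruledness with the universal property of the MRC fibration and the dimension estimate of Theorem \ref{mthm_semi_k_ricci} to derive a numerical contradiction.

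The key step is to show that $\phi$ factors through the MRC fibration $\psi \colon M \dra R$ of $M$. Let $F$ denote a generic fiber of $\psi$, which is rationally connected by construction. Its image $\phi(F)$ is then rationally connected as well, being the image of a rationally connected variety under a dominant rational map onto it. If $\phi(F)$ were positive-dimensional, it would be a positive-dimensional rationally connected, hence uniruled, subvariety of $Z$; moreover, since $\phi$ is dominant and the fibers $F$ sweep out $M$, such images would pass through a generic point of $Z$, forcing $Z$ itself to be uniruled -- a contradiction. Therefore $\phi(F)$ is a point for generic $F$, so $\phi$ descends to a dominant rational map $\bar\phi \colon R \dra Z$ with $\phi = \bar\phi \circ \psi$.

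Finally, I would run the dimension count. By Theorem \ref{mthm_semi_k_ricci}, the generic fiber of $\psi$ has dimension at least $n-k+1$, whence $\dim R = n - \dim F \leq k-1$. On the other hand, the existence of the dominant rational map $\bar\phi \colon R \dra Z$ forces $\dim R \geq \dim Z = k$. These two inequalities are incompatible, so no dominant rational map $\phi \colon M \dra Z$ can exist.

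The main obstacle is the factoring step, i.e., establishing that $\phi$ collapses the generic MRC fibers to points. This rests crucially on the non-uniruledness of $Z$ coming from \cite{BDPP_13}: one has to rule out the a priori possibility that $\phi$ maps the rationally connected fibers onto positive-dimensional subvarieties of $Z$, which is precisely where the observation that such images would place a rational curve through a generic point of $Z$ comes into play. Once the factoring is in place, the remaining dimension count is immediate.
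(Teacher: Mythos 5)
Your argument is correct, but it takes a genuinely different route from the one the paper indicates for this corollary. The paper's proof is to re-run the curvature-integral argument of Theorem \ref{mthm_k_ricci} verbatim with the given dominant rational map $\varsigma\colon M\dra Z$ in place of the MRC fibration: here $m=\dim Z=k$, so the relevant partial trace of the Ricci curvature is bounded below by the sum of the $k$ smallest eigenvalues, which is non-negative everywhere and positive at some point by hypothesis, forcing $\int_M c_1(E)\wedge\omega^{n-1}>0$, while the assumed pseudo-effectivity of $K_Z$ together with Proposition \ref{pullback_pseff} forces this integral to be $\leq 0$. You instead treat Theorem \ref{mthm_semi_k_ricci} as a black box and combine it with the non-uniruledness of $Z$ coming from \cite{BDPP_13}: the generic MRC fiber must be contracted by $\varsigma$ (otherwise its positive-dimensional rationally connected image puts a rational curve through a general point of $Z$), so $\varsigma$ factors through the MRC base, whose dimension is at most $k-1<k=\dim Z$, a contradiction. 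This is essentially the argument the authors themselves spell out for the parallel corollary to Theorem \ref{k_scalar_thm}, where the direct re-run of the analytic proof is obstructed by the non-compactness of the fibers of $\varsigma$; your version merely packages it as an explicit factorization plus a dimension count instead of deriving the uniruledness of $Z$ directly from the incompatibility of fiber dimensions. What your route buys is complete independence from the analytic details of Section \ref{proof_section}; what the paper's direct route buys is that the corollary needs no second appeal to \cite{BDPP_13} or to the behavior of rationally connected varieties under rational maps, since the pseudo-effectivity of $K_Z$ is fed straight into the integral inequality. The only point in your write-up deserving a word of care is the claim that the images $\varsigma(F)$ pass through a generic point of $Z$: one should note that the locus of points lying on generic MRC fibers is dense open in $M$ and that $\varsigma$ restricted to it is still dominant, so a general $z\in Z$ does have a preimage on a generic fiber; this is routine and does not affect the validity of the proof.
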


Our second main theorem is the subsequent one. The invariant $\rr$ captures the largest codimension of maximal subspaces in the tangent spaces on which the holomorphic sectional curvature vanishes. Its analog in the semi-negative curvature case (which we denoted by $r_M$) was used for the structure theorems in \cite{heier_lu_wong_2015} and it is similar to the established notion of the Ricci rank. One of our more philosophical points is that in connection with holomorphic sectional curvature, $r_M$ and $\rr$ are appropriate numerical invariants to consider. It is thus tempting to call them the {\it HSC rank}. For a precise definition, we refer to Section 2.

\begin{theorem} \label{mthm_hsc}
Let $M$ be a projective manifold with a K\"ahler metric of semi-positive holomorphic sectional curvature. Then a generic fiber of the MRC fibration of $M$ has dimension at least $\rr$.
\end{theorem}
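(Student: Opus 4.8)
The plan is to mirror the strategy that proves Theorem \ref{mthm_k_ricci}, replacing the role played there by $k$-positive Ricci curvature with the HSC-rank invariant $\rr$. Recall the underlying mechanism: the MRC fibration $\phi: M \dashrightarrow Z$ has the property that its image $Z$ is not uniruled, hence (by the BDPP theorem cited in the introduction) its canonical bundle $K_Z$ is pseudo-effective. If $\dim Z = q$, then the generic fiber has dimension $n - q$, so proving that the generic fiber has dimension at least $\rr$ is equivalent to proving $\dim Z \le n - \rr$. So the goal is to bound the dimension of the non-uniruled quotient $Z$ from above by $n - \rr$.

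The key step is to show that semi-positive holomorphic sectional curvature, together with the defining positivity built into $\rr$, forces any dominant rational map from $M$ to a manifold $Z$ of dimension $q > n - \rr$ to violate pseudo-effectivity of $K_Z$. Concretely, I would argue by contradiction: suppose the MRC quotient $Z$ satisfies $\dim Z = q \ge n - \rr + 1$. The heart of the argument should be a curvature-positivity computation. The invariant $\rr$ records that at a generic point (in fact, by semi-continuity, on a dense open set), the holomorphic sectional curvature is positive on a subspace of the tangent space of dimension $\rr$, i.e., the locus where $H$ vanishes has codimension $\rr$. Pushing forward or pulling back an appropriate positive $(q,q)$-form, one wants to produce a nonzero pseudo-effective class that pairs negatively against $K_Z$, contradicting $K_Z \in \overline{\PEff}$. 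The technical engine here is the relationship between semi-positive holomorphic sectional curvature and the positivity of the determinant of the curvature restricted to a well-chosen subbundle; integrating the holomorphic sectional curvature over an appropriate family of directions (as in the classical integral formula that recovers the Ricci and scalar curvatures by averaging $H$ over the unit sphere) yields a semi-positive form whose positivity is guaranteed precisely on the $\rr$-dimensional positive locus.

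**The main obstacle** will be controlling the behavior of the curvature under the rational map and ensuring that the positivity captured by $\rr$ survives restriction to, or projection onto, the fibers of $\phi$. Since $\phi$ is only a rational map and $Z$ is recovered only up to birational equivalence, one must work on a resolution $\tilde M \to M$ and handle the indeterminacy locus, checking that the relevant differential-geometric quantities extend across it or that the pseudo-effective class one constructs is insensitive to modifications in codimension $\ge 1$. The delicate point is that holomorphic sectional curvature, unlike Ricci curvature, does not behave well under restriction to submanifolds (the HSC of a submanifold in a given direction equals the ambient HSC in that direction, but the relevant averaged forms do not simply restrict); thus the passage from the $\rr$-dimensional positivity in $TM$ to a statement about the vertical or horizontal tangent directions of the fibration requires care.

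**Finally**, I would combine the curvature computation with the BDPP characterization exactly as in the proof of Theorem \ref{mthm_k_ricci}: the constructed positive class, realized as a suitable power of a semi-positive representative of a curvature form, produces a movable curve class (or a positive intersection) that meets $K_Z$ negatively whenever $\dim Z$ exceeds $n - \rr$. This contradicts the non-uniruledness of the MRC base, forcing $\dim Z \le n - \rr$ and hence the asserted lower bound $n - q \ge \rr$ on the generic fiber dimension. The confirmation of Yau's conjecture alluded to in the introduction should then follow in the special case where $\rr = n$, i.e., where the holomorphic sectional curvature is positive on a full-dimensional subspace generically, which would force $Z$ to be a point and $M$ to be rationally connected.
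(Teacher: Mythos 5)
Your high-level plan matches the paper's: argue by contradiction assuming $\dim Z = m \ge n-\rr+1$, use the Graber--Harris--Starr/BDPP fact that $K_Z$ is pseudo-effective to get $\int_M c_1(E)\wedge\omega^{n-1}\le 0$ for the (reflexive extension of the) pullback bundle $E=f^*TZ$ with $\det E = f^*K_Z^*$, and then contradict this by a curvature computation using the quotient metric on $E$. You also correctly guess the source of positivity: Berger's lemma, expressing the scalar curvature with respect to a plane as a positive multiple of the average of $H$ over the unit sphere in that plane, applied to the horizontal plane $S^\perp$ of dimension $m\ge \mathfrak{n}(p)+1$, which cannot lie inside a maximal null subspace of $H$ at a point where $n-\mathfrak{n}(p)=\rr$.

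However, there is a genuine gap at exactly the point you flag as ``requires care'' and then leave unresolved. The trace $s_E$ of the curvature form $\eta$ of the quotient metric on $E$ against $g$ splits as $s_E = K_1 + K_2$, where $K_1$ is the partial trace over the horizontal directions $S^\perp$ and $K_2$ is the partial trace over the fiber directions. Berger's averaging of $H$ (together with the curvature-increasing property of quotient bundles) controls only $K_1$. The vertical piece
$$K_2 = -\sum_{i,j=1}^{n-m} g^{i\bar j}\frac{\partial^2\log\det h}{\partial z_i\partial\bar z_j}$$
has no pointwise sign and is not governed by the holomorphic sectional curvature of $M$ in any direct way; your proposal offers no mechanism for it, and without one the integral $\int c_1(E)\wedge\omega^{n-1}$ cannot be shown positive. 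The paper handles this by a global argument that is absent from your sketch: writing $K_2 = \Delta'\hbar/\hbar + \sum g^{i\bar j}\hbar^{-2}\partial_i\hbar\,\partial_{\bar j}\hbar$ with $\hbar=\det h$ and $\Delta'$ the fiberwise Laplacian, noting the second term is non-negative, and killing the first term by Fubini plus the Divergence Theorem on the compact fibers $f^{-1}(w)$ over a small open set $\VV$ in the locus where $f$ is a submersion. This is the genuinely new ingredient relative to the proof of Theorem \ref{mthm_k_ricci}, and it is what makes compactness of the generic fiber essential. A minor further point: upper semicontinuity of $\mathfrak{n}(p)$ only makes the locus $\{p : n-\mathfrak{n}(p)=\rr\}$ open and non-empty, not dense, so one must choose $\VV$ so that $f^{-1}(\VV)$ meets this open set; positivity of the $K_1$-integral then comes from that open piece while elsewhere one only has $\hat K_1\ge 0$.
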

\begin{remark}
In the special case that $\rr=\dim M$, the above theorem yields that $M$ is rationally connected. It is immediate from the definition that $\rr=\dim M$ is achieved as soon as the holomorphic sectional curvature is positive at one point of $M$ (and semi-positive on all of $M$). In particular, Theorem \ref{mthm_hsc} yields that a projective manifold with a K\"ahler metric of positive holomorphic sectional curvature is rationally connected. This had been conjectured by S.-T. Yau (even in the K\"ahler case) and was included in his 1982 list of problems \cite[Problem 47]{Yau_120_problems}.\par
Moreover, recall that Tsukamoto \cite{Tsukamoto_1957} proved that a compact K\"ahler manifold of positive holomorphic sectional curvature is simply connected. We remark that the above theorem yields the same conclusion for the case of a projective K\"ahler manifold $M$ of positive holomorphic sectional curvature and in fact extends it to the case of semi-positive holomorphic sectional curvature with $\rr=\dim M$.  The reason is that, due to \cite{Campana_twistor}, it is known that a rationally connected projective manifold is simply connected. Recall furthermore that it is known that a rationally connected projective manifold has no global non-zero covariant holomorphic tensor fields and that the converse of this statement is a conjecture of Mumford (see \cite[p.\ 202]{kollar_book}).  
\end{remark}
The proof of the next theorem is essentially identical to that of Theorem \ref{mthm_hsc}. Thus, we simply record the theorem here and omit its proof.
\begin{theorem}\label{k_scalar_thm}
Let $M$ be a projective manifold with a K\"ahler metric of semi-positive scalar curvature with respect to $k$-dimensional subspaces, where $k\in \{1,\ldots,n:=\dim M\}$. Assume that there exists at least one point of $M$ at which the scalar curvature with respect to $k$-dimensional subspaces is positive. Then a generic fiber of the MRC fibration of $M$ has dimension at least $n-k+1$.
\end{theorem}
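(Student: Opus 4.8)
The plan is to mirror the proof of Theorem \ref{mthm_hsc}, the only change being that the role played there by semi-positive holomorphic sectional curvature -- which, via Berger's averaging of the holomorphic sectional curvature over the unit sphere of a subspace, is the same as semi-positivity of the scalar curvature with respect to that subspace -- is now played directly by the hypothesis on the scalar curvature with respect to $k$-dimensional subspaces. First I would set up the maximal rationally connected fibration $\phi\colon M\dra Z$. By the theorem of Graber--Harris--Starr the base $Z$ is not uniruled (see \cite{kollar_book}), so by the characterization of uniruledness in \cite{BDPP_13} the canonical bundle $K_Z$ is pseudo-effective; I would fix a singular Hermitian metric $h_Z$ on $K_Z$ with semi-positive curvature current. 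Passing to a resolution $\mu\colon M'\to M$ on which $\tilde\phi=\phi\circ\mu$ is a morphism and writing $d=\dim Z$, the differential of $\tilde\phi$ induces a nonzero holomorphic section $\sigma$ of $\Omega_{M'}^{d}\otimes\tilde\phi^{*}K_Z^{-1}$; equivalently, $\tilde\phi^{*}K_Z$ is realized as a pseudo-effective line subsheaf of $\Omega_{M'}^{d}$. On the Zariski-open locus $U$ where $\phi$ is a submersion this subsheaf is, at each point $p$, the line spanned by the decomposable $(d,0)$-form annihilating the vertical tangent space $\ker d\phi_p$; dually it records the horizontal $d$-dimensional subspace $\Sigma_p\subseteq T_pM$. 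The goal is to show $d\le k-1$, so that the generic fiber has dimension $n-d\ge n-k+1$, and I would argue by contradiction, assuming $d\ge k$.

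The analytic heart is a comparison of two metrics on $L:=\tilde\phi^{*}K_Z$: the pseudo-effective metric $h:=\tilde\phi^{*}h_Z$, whose curvature is semi-positive in every direction, and the metric induced from the K\"ahler metric $\omega$ through the inclusion $L\hookrightarrow\Omega^{d}$. Using the identification $\Theta(\Omega_M^{1})=-{}^{t}\Theta(T_M)$ together with Griffiths' curvature-decreasing property for the sub-line-bundle $L\subseteq\Omega^{d}$, the curvature of the induced metric, traced over the horizontal subspace $\Sigma_p$ itself, is bounded above by $-s_{\Sigma_p}$, the negative of the scalar curvature of $M$ with respect to the $d$-dimensional subspace $\Sigma_p$. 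Applying the maximum principle to the global function $\lvert\sigma\rvert^{2}_{\omega,h^{-1}}$ -- equivalently, evaluating at a point where the quotient of the two metrics is extremal -- converts the semi-positivity coming from pseudo-effectivity into the pointwise inequality $s_{\Sigma_p}\le 0$ at that point. This is exactly the step in which, in Theorem \ref{mthm_hsc}, semi-positive holomorphic sectional curvature (hence $s_{\Sigma_p}\ge 0$ in \emph{every} dimension, by Berger) forces $\Sigma_p$ to be a null subspace and thereby bounds $d$ in terms of $\rr$.

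Combining this with the hypothesis should then yield the contradiction: the scalar curvature with respect to $k$-dimensional subspaces is everywhere $\ge 0$, so the subspace produced above is a zero of a non-negative quantity, and feeding in the distinguished point at which it is strictly positive rules out $d\ge k$. I expect the main obstacle to be precisely the isolation of the scalar curvature with respect to $k$-dimensional subspaces. Unlike in Theorem \ref{mthm_k_ricci}, where pairing $c_1(L)$ with $\omega^{n-1}$ produces the \emph{full} trace, hence the Ricci-type quantity directly, and where monotonicity of sums of eigenvalues lets one pass from $k$ to $d\ge k$, here one is forced to work with a \emph{partial} trace over the horizontal directions, which does not integrate to zero; the argument must therefore remain pointwise. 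Reconciling the point furnished by the maximum principle with the distinguished point of strict positivity, and testing the $d$-dimensional datum $s_{\Sigma_p}$ against $k$-dimensional subspaces when $d>k$, is the delicate part, and is to be handled exactly as in the proof of Theorem \ref{mthm_hsc}.
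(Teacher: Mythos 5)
Your overall frame (MRC fibration, Graber--Harris--Starr, pseudo-effectivity of $K_Z$ via \cite{BDPP_13}, comparison of the induced metric on the pulled-back canonical bundle of the base with a metric of semi-positive curvature, reduction to showing $d\le k-1$) matches the paper, but the analytic engine you propose is not the one the paper uses, and as stated it has a genuine gap. The paper's argument is an \emph{integral} one: it shows $\int_M c_1(f^*K_Z)\wedge\omega^{n-1}\ge 0$ purely numerically (pseudo-effective classes pair non-negatively with $\omega^{n-1}$; no singular metric is ever evaluated pointwise), and separately shows that under the contradiction hypothesis $m\ge k$ the same integral is strictly negative, by writing the trace of the curvature of $E=f^*TZ$ as $K_1+K_2$, bounding the horizontal partial trace $K_1$ below by the scalar curvature with respect to the $m$-dimensional horizontal subspace, and disposing of the vertical part $K_2$ by Fubini plus the Divergence Theorem on the compact fibers. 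Strict positivity of the total integral then needs only \emph{one} point where the scalar curvature with respect to $k$-dimensional (hence $m$-dimensional, by the monotonicity in $\kappa$ noted in Section \ref{diff_alg_geom_backgr}) subspaces is positive, because a non-negative integrand that is positive on an open set has positive integral.

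Your maximum-principle version cannot close in the same way. First, pseudo-effectivity of $K_Z$ only furnishes a \emph{singular} metric $h_Z$ with semi-positive curvature current; the function $\log\lvert\sigma\rvert^2_{\omega,h^{-1}}$ is then not smooth, its supremum may not be attained at a point where the curvature inequality can be read off classically, and the pointwise inequality $0\le \tr_{\Sigma_p}\Theta_h$ you need at the extremal point is simply not available for a general pseudo-effective class. Second, and more fatally, even granting a smooth semi-positive $h_Z$, the extremal point $p$ produced by the maximum principle is not under your control: at $p$ the hypothesis gives only $s_{\Sigma_p}\ge 0$, which together with your upper bound $s_{\Sigma_p}\le 0$ yields $s_{\Sigma_p}=0$ --- no contradiction. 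The single distinguished point of strict positivity lives elsewhere, and a pointwise argument has no mechanism to transport the contradiction there; this is precisely why the paper integrates. Your closing remark that this reconciliation ``is to be handled exactly as in the proof of Theorem \ref{mthm_hsc}'' does not repair this, because that proof is not a maximum-principle argument: it localizes the integral over $f^{-1}(\VV)$ for a small open $\VV\subset Z$ whose preimage meets the open locus of strict positivity, and it is exactly there that the vertical term $K_2$ (which your partial-trace setup suppresses but which reappears once you integrate) must be shown to contribute non-negatively via the fiberwise Divergence Theorem. To fix your write-up, replace the extremal-point step by the paper's integral of $c_1(E)\wedge\omega^{n-1}$ and carry out the $K_1+K_2$ decomposition; the only change from the proof of Theorem \ref{mthm_hsc} is then that $\hat K_1\ge 0$ (and $>0$ somewhere) is the hypothesis itself rather than a consequence of Berger's lemma.
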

In this case, the corresponding statement regarding the non-existence of certain maps is the following corollary.
\begin{corollary}
Let $M$ be a projective manifold with a K\"ahler metric of semi-positive scalar curvature with respect to $k$-dimensional subspaces, where $k\in \{1,\ldots, \dim M\}$. Assume that there exists at least one point of $M$ at which the scalar curvature with respect to $k$-dimensional subspaces is positive. Let $Z$ be a projective $k$-dimensional manifold with pseudo-effective canonical line bundle. Then there does not exist a dominant rational map from $M$ to $Z$.\end{corollary}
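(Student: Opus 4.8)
The plan is to argue by contradiction and reduce the assertion to the quantitative dimension bound already furnished by Theorem~\ref{k_scalar_thm}. So suppose, to the contrary, that there exists a dominant rational map $f\colon M\dashrightarrow Z$ onto a $k$-dimensional projective manifold $Z$ whose canonical bundle $K_Z$ is pseudo-effective. First I would record that $Z$ is then not uniruled: by the Boucksom--Demailly--Peternell--P\u aun equivalence \cite{BDPP_13}, a projective manifold is uniruled precisely when its canonical bundle fails to be pseudo-effective, so the pseudo-effectivity of $K_Z$ forbids uniruledness of $Z$. (Note that this step works uniformly even when $K_Z$ is numerically trivial, e.g.\ when $Z$ is an abelian variety, which is why I prefer to route the argument through non-uniruledness rather than through a direct intersection-number computation.)

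The central step will be to factor $f$ through the MRC fibration $\phi\colon M\dashrightarrow R$ of $M$. Let $F$ be a general fiber of $\phi$; by construction $F$ is rationally connected and these fibers cover a dense open subset of $M$. Restricting $f$, the image $f(F)\subseteq Z$ is rationally connected, being the image of a rationally connected variety. If $f(F)$ were positive-dimensional it would be a positive-dimensional rationally connected, hence uniruled, subvariety sweeping out a general point of $Z$ as $F$ varies, forcing a rational curve through a general point of $Z$ and contradicting the non-uniruledness just established. Hence $f$ must be constant along general fibers of $\phi$, so it descends to a dominant rational map $R\dashrightarrow Z$; this is exactly the universal property of the MRC fibration (see \cite{k_m_m_jag,kollar_book}). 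In particular $\dim Z\le \dim R$.

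The final step simply combines this with Theorem~\ref{k_scalar_thm}: a generic fiber of the MRC fibration of $M$ has dimension at least $n-k+1$, so $\dim R = n-\dim(\text{general fiber})\le k-1$. Chaining the inequalities yields $k=\dim Z\le\dim R\le k-1$, the desired contradiction. I expect the only point genuinely requiring care to be the factorization in the second step, namely the verification that the rationally connected fibers of $\phi$ are collapsed by $f$; this is precisely where the hypothesis on $Z$ (via its non-uniruledness) enters, while the rest is formal once Theorem~\ref{k_scalar_thm} is granted. One should also note that this same reasoning, applied with $Z$ taken to be the base of the MRC fibration, is already embedded in the proof of Theorem~\ref{k_scalar_thm}, which is why the corollary drops out immediately.
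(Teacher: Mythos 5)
Your proposal is correct and uses the same essential ingredients as the paper's argument: the fiber-dimension bound of Theorem~\ref{k_scalar_thm} together with the \cite{BDPP_13} equivalence between uniruledness and non-pseudo-effectivity of $K_Z$. The only (immaterial) difference is organizational --- the paper resolves the indeterminacy of the map to $Z$ and directly concludes that the MRC fibers, being strictly larger than the fibers over $Z$, force $Z$ to be uniruled, whereas you first invoke non-uniruledness of $Z$ to descend the map to the MRC base and then derive the contradiction $k\le k-1$; these are contrapositive arrangements of the same argument.
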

At first glance, it might seem that the kinds of methods used in our proof of Theorems \ref{mthm_hsc} and \ref{k_scalar_thm} require that the generic fibers of the dominant rational map in the above corollary are assumed to be compact. However, this is not the case, as one can show with the following additional arguments. Let us assume that a map $\varsigma$ from $M$ to $Z$ exists as in the corollary. We may remove the indeterminacy of $\varsigma$ as usual with a holomorphic birational map $\rho: M^* \to M$ such that $\varsigma\circ \rho:M^*\to Z$ is holomorphic. It is immediate from the definition that the generic fibers of the respective MRC fibrations of $M$ and $M^*$ have the same dimension, which is at least $n-k+1$ according to Theorem \ref{k_scalar_thm}. Since $Z$ is assumed to be $k$-dimensional, the generic fibers of $\varsigma\circ \rho$ are of the strictly smaller dimension $n-k$, and it is now clear that there exists a rational curve through a generic point of $Z$, i.e., $Z$ is uniruled. However, this is impossible due to \cite{BDPP_13} and the assumption that $Z$ has pseudo-effective canonical line bundle.
\begin{remark}
We would like to point out that in our theorems, the Ricci curvature may have negative eigenvalues and our theorems will still apply as long as the assumed positivity conditions hold, while other works in this direction seem to require that the Ricci curvature is semi-definite.
\end{remark}
\begin{remark}
It is clear that all we really require in terms of positivity assumptions is the positivity of the integrals appearing in our proofs. It is for this same reason that our result in \cite{heier_wong_cag} is stated in terms of {\it total} scalar curvature, which is the integral of the scalar curvature function. Therefore, making pointwise positivity assumptions as we do in our theorems is actually overkill and basically just due to our desire to formulate iconic theorems.
\end{remark}
\begin{remark}
In the remainder of this paper, we will use the symbol $k$ as part of the set of indices $i,j,k,l$, and thus speak of $\kappa$-positive Ricci curvature etc., again with $\kappa\in \{1,\ldots,n:=\dim M\}$.
\end{remark}
The outline of this paper is as follows. In Section \ref{diff_alg_geom_backgr}, we provide the basic definitions and technical background. In Section \ref{proof_section}, our two main theorems are proven. The proofs completely coincide except at their very end, where the respective specific positivity assumptions are used. In the subsection entitled Proof of Theorem \ref{mthm_hsc}, we provide the part of the proof that differs from the proof of Theorem \ref{mthm_k_ricci}.

\begin{acknowledgement}
The first author thanks Brian Lehmann for a helpful discussion about pseudo-effective line bundles.
\end{acknowledgement}

\section{Differential and algebraic geometric background material}\label{diff_alg_geom_backgr}
In this section, we will give all the relevant definitions used throughout the paper. They are mostly well-known, but we hope the reader will find it useful to have all of them gathered neatly in one place.
\subsection{Notions of curvature}
Let $M$ be an $n$-dimensional complex manifold. If $V$ is a hermitian vector bundle on $M$ of rank $\rho$, then we denote by $\theta_{\alpha\beta}$ $(\alpha,\beta=1,\ldots, \rho)$ the connection matrix of the metric connection with respect to a local frame $f_1,\ldots,f_\rho$. The corresponding curvature tensor $\Theta$ is determined by $$\Theta_{\alpha\beta}=d\theta_{\alpha\beta}-\sum_{\gamma=1}^\rho\theta_{\alpha\gamma}\wedge\theta_{\gamma\beta}.$$\par
One of the most interesting cases of the above is when $V$ is the holomorphic tangent bundle of $M$ with a local frame $\frac{\partial}{\partial z_1},\ldots,\frac{\partial}{\partial z_n}$, and the hermitian metric is a K\"ahler metric. Let us recall the basics of this case.\par
Let  $z_1,\ldots,z_n$ be local coordinates on $M$. Let
\begin{equation*}
g=\sum_{i,j=1}^n g_{i\bar j} dz_i\otimes d\bar{z}_j
\end{equation*}
be a hermitian metric on $M$ and 
$$\omega=\frac{\sqrt{-1}}{2}\sum_{i,j=1}^n g_{i\bar{j}} dz_i\wedge d\bar{z}_{{j}}$$ the $(1,1)$-form associated to $g$. Then $g$ is called {\it K\"ahler} if and only if locally there exists a real-valued function $f$ such that $g_{i\bar j}=\frac{\partial^2 f}{\partial z_i\partial \bar z_j}$. An equivalent characterization of the K\"ahler property is that $d\omega = 0$. \par
For holomorphic tangent vectors $u=\sum_{i=1}^{n} u_i\frac{\partial}{\partial z_i}$, $v=\sum_{i=1}^{n} v_i\frac{\partial}{\partial z_i}$, we define the $(1,1)$-form $\Theta_{u\bar v}$ to be
$$\Theta_{u\bar v}=\sum_{i,j,k=1}^{n}\Theta_{ik}g_{k\bar j}u_i \bar v _j.$$
Moreover, the curvature $4$-tensor is given by
$$R(\frac{\partial}{\partial z_i},\frac{\partial}{\partial \bar z_j},\frac{\partial}{\partial z_k},\frac{\partial}{\partial \bar z_l})=\Theta_{\frac{\partial}{\partial z_k}\frac{\partial}{\partial \bar z_l}}(\frac{\partial}{\partial z_i},\frac{\partial}{\partial \bar z_j}).$$
Since we assume the metric $g$ to be K\"ahler, the curvature $4$-tensor satisfies the K\"ahler symmetry
$$R(\frac{\partial}{\partial z_i},\frac{\partial}{\partial \bar z_j},\frac{\partial}{\partial z_k},\frac{\partial}{\partial \bar z_l})=R(\frac{\partial}{\partial z_k},\frac{\partial}{\partial \bar z_l},\frac{\partial}{\partial z_i},\frac{\partial}{\partial \bar z_j}).$$
Now, we define the {\it Ricci curvature form} to be the $(1,1)$-form
$$Ric= \sqrt{-1}Tr (\Theta)(\cdot,\cdot)=\sqrt{-1}\sum _{i,j,k=1}^n R(\frac{\partial}{\partial z_i},\frac{\partial}{\partial \bar z_j},e_k,\bar e_k) dz_i\wedge d\bar{z}_{{j}},$$
where $e_1,\ldots,e_n$ is a unitary frame. If we let 
$$R_{i\bar j}= \sum _{k=1}^n R(\frac{\partial}{\partial z_i},\frac{\partial}{\partial \bar z_j},e_k,\bar e_k),$$
then
$$Ric = \sqrt{-1} \sum_{i,j=1}^{n} R_{i\bar j} dz_i\wedge d\bar{z}_{{j}}.$$\par
We say that the Ricci curvature is {\it $\kappa$-(semi-)positive} at the point $p\in M$ if the eigenvalues of the hermitian $n \times n$ matrix $R_{i\bar j}$ at $p$ have the property that any sum of $\kappa$ of them is (semi-)positive. Note that if $\kappa \leq \kappa '$, then being $\kappa$-(semi-)positive implies being $\kappa'$-(semi-)positive. Moreover, by definition, being $n$-(semi-)positive is the same as having (semi-)positive scalar curvature, and being $1$-(semi-)positive is the same as having (semi-)positive Ricci curvature. We say that the Ricci curvature is {\it $\kappa$-(semi-)positive} if it is $\kappa$-(semi-)positive at all points $p\in M$.\par
The {\it scalar curvature with respect to a $\kappa$-dimensional subspace} $\Sigma\subset T_p M$ is defined to be
$$\sum_{i,j=1}^\kappa R(e_i,\bar e_i, e_j,\bar e_j),$$
where $e_1,\ldots,e_\kappa$ is a unitary basis for $\Sigma$. When $\Sigma=T_p M$, we simply speak of the {\it scalar curvature}. We say that the {\it scalar curvature with respect to $\kappa$-dimensional subspaces} is {\it (semi-)positive at the point $p\in M$} if the scalar curvature with respect to all $\kappa$-dimensional subspaces $\Sigma\subset T_p M$ is \mbox{(semi-)}positive. We say that the scalar curvature with respect to $\kappa$-dimen\-sional subspaces is {\it (semi-)positive} if it is (semi-)positive at all points $p\in M$. Note again that these (semi-)positivity properties are preserved under increasing the value of $\kappa$.\par

If $\xi=\sum_{i=1}^n\xi_i \frac{\partial }{\partial z_i}$ is a non-zero complex tangent vector at $p\in M$, then the {\it holomorphic sectional curvature} $H(\xi)$ is given by
\begin{equation*}
H(\xi)=\left( 2 \sum_{i,j,k,l=1}^n R_{i\bar j k \bar l}(p)\xi_i\bar\xi_j\xi_k\bar \xi_l\right) / \left(\sum_{i,j,k,l=1}^ng_{i\bar j}g_{k\bar l} \xi_i\bar\xi_j\xi_k\bar \xi_l\right).
\end{equation*}
We say that the holomorphic sectional curvature is {\it (semi-)positive} if 
$$H(\xi) > 0\ (\geq 0) \quad \forall \xi\quad \forall p\in M.$$ 
Now, let us assume that $H$ is semi-positive. We define the invariant $\rr$ as follows. For $p\in M$, let $\mathfrak{n}(p)$ be the maximum of those integers $\ell\in \{0,\ldots,\dim M\}$ such that there exists a $\ell$-dimensional subspace $L\subset T_p M$ with $H(\xi)=0$ for all $\xi\in L\backslash \{\vec{0}\}$. Set $\rr:=n- \min_{p\in M} \mathfrak{n}(p).$ Note that by definition $\rr=0$ if and only if $H$ vanishes identically. Also, $\rr=\dim M$ if and only there exists at least one point $p\in M$ such that $H$ is positive at $p$. Moreover, $\mathfrak{n}(p)$ is upper-semicontinuous as a function of $p$, and consequently the set
$$\{p\in M\ |\ n-\mathfrak{n}(p)=\rr\}$$
is an open set in $M$ (in the classical topology). \par

We conclude this subsection with some hopefully useful historical remarks, in particular about the relationship of Ricci and holomorphic sectional curvature. \par
The existence of a K\"ahler metric of negative holomorphic sectional curvature implies the existence of a (different) metric of negative Ricci curvature, as was very recently established by \cite{wu_yau} in the projective case and, in the general case, by \cite{tosatti_yang}. Previously, the three-dimensional projective case of this statement had been proven in \cite{heier_lu_wong_mrl}. The paper \cite{heier_lu_wong_2015} contains partial positivity results for the canonical line bundle in the semi-negative projective case. Conversely, it is easy to show via a Schwarz Lemma that there are many K\"ahler manifolds of negative Ricci curvature which do not admit metrics of negative holomorphic sectional curvature. \par
Under the assumption of positive scalar curvature on a hermitian manifold, in the pioneering paper \cite{Yau_74}, S.-T. Yau proved that the Kodaira dimension is negative. At the other end of the positive curvature spectrum, Siu and Yau \cite{siu_yau} proved the Frankel conjecture, which states that a compact K\"ahler manifold of positive bisectional curvature is biholomorphic to projective space. At approximately the same time, Mori \cite{Mori_79} established a more general conjecture of Hartshorne, which states that a
compact complex manifold with ample tangent bundle is biholomorphic to projective space. \par

In light of the above, it may come as a bit of a surprise that positive holomorphic sectional curvature does in general not imply the existence of a metric of positive Ricci curvature, as pointed out by Hitchin \cite{Hitchin} in his discussion of the Hirzebruch surfaces $\PP(\OO_{\PP^1}(a) \oplus  \OO_{\PP^1})$, $a\in \{0,1,2,\ldots\}$. Moreover, we do not know if a compact K\"ahler manifold of positive holomorphic sectional curvature is projective. Conversely, the question of the implications of positive Ricci curvature for the existence of a metric of positive holomorphic sectional curvature seems to be open at this point as well. 

\subsection{The MRC fibration}
On a smooth projective variety $X$ it is quite natural to consider the equivalence relation of being connected by a rational curve, i.e., two points $x,y\in X$ are considered to be equivalent if and only if there exists a rational curve containing both $x$ and $y$. The problem is that the map to the quotient under this equivalence relation is in general not a good map. This is the case, for example, when $X$ is a very general projective K3 surface because such an $X$ possesses countably infinitely many rational curves. The question of how to obtain a good map based on this equivalence relation has been answered by Campana \cite{campana_con_rat} and Koll\'ar-Miyaoka-Mori \cite{k_m_m_jag}. The following theorem is \cite[Theorem 2.7]{k_m_m_jag}.
\begin{theorem}
Let $X$ be a smooth proper algebraic variety over an algebraically closed field of characteristic $0$. Then there exist an open dense subset $U\subset X$ and a proper smooth morphism $f:U\to Z$ with the following properties:
\begin{itemize}
\item Every fiber of $f$ is rationally connected.
\item For a sufficiently general $z\in Z$, there are no rational curves $D\subset X$ such that $\dim D\cap f^{-1}(z) = 0$. 
\end{itemize}
\end{theorem}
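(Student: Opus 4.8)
The plan is to construct the maximal rationally connected fibration (the ``rational quotient'') as a geometric quotient, on a suitable open set, of the equivalence relation generated by connected chains of rational curves. First I would set up the parameter space of chains: inside the Chow variety $\mathrm{Chow}_1(X)$ of $1$-cycles, consider the locally closed locus whose points parametrize connected chains of rational curves, and fix a proper family $\mathcal{C}\to T$ of such chains together with the evaluation morphism $u\colon\mathcal{C}\to X$. This lets one define, for a point $x\in X$, the reachable set $Z_x$ of points joined to $x$ by a connected chain of rational curves, and more generally the $r$-step reachable sets obtained by iterating the correspondence $r$ times. A Noetherian dimension argument shows that for $x$ in a dense open subset the dimension of the closure of the reachable set stabilizes to a common maximal value $d$, independent of $x$ in that open set.

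Next I would promote the relation ``chain-connected'' to an honest morphism. Choosing a general $x$ so that $Z_x$ has dimension $d$, one argues that as $x$ varies in a dense open $X^0\subset X$ the loci $Z_x$ are the fibers of a morphism $f\colon X^0\to Z^0$. This is the technical heart, and I expect it to be the main obstacle: one must verify that the reachable-set correspondence, after passing to a suitable iterate, defines a proper closed equivalence relation whose quotient exists as an algebraic space. Here I would invoke Koll\'ar's machinery of proper prerelations and their geometric quotients (or Campana's equivalent construction), and then use generic smoothness in characteristic $0$ to shrink to an open subset $U$ on which $f$ is a proper smooth morphism with equidimensional fibers equal to the corresponding $Z_x$.

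It then remains to verify the two asserted properties. For the first, each fiber $f^{-1}(z)$ is by construction rationally chain connected; in characteristic $0$ a smooth proper rationally chain connected variety is rationally connected, by smoothing the chains via the deformation theory of rational curves attached to a general free curve, so the fibers are rationally connected after possibly shrinking $Z^0$. For the second, suppose a rational curve $D\subset X$ met a general fiber $F=f^{-1}(z)$ in a nonempty finite set, i.e.\ $\dim D\cap F=0$. Since $D$ is itself rationally connected, any two of its points are chain-connected, so every point of $D$ lies in the same equivalence class as a point of $F$; by maximality of the reachable sets this forces $D\subset F$, contradicting $\dim D\cap F=0$. Hence no such $D$ exists, which is exactly the stated maximality property, and the theorem follows.
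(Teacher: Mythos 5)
This statement is not proved in the paper at all: it is quoted verbatim as Theorem 2.7 of Koll\'ar--Miyaoka--Mori \cite{k_m_m_jag} (the existence of the MRC fibration), so there is no in-paper argument to compare against. Your sketch follows the standard construction of the rational quotient, and its skeleton is sound: parameter spaces of chains of rational curves, stabilization of the dimension of the reachable sets, Koll\'ar's quotient machinery for proper prerelations to produce $f$ on a dense open set, generic smoothness to make $f$ smooth and proper there, and the smoothing of chains in characteristic $0$ to upgrade rational chain connectedness of the smooth proper fibers to rational connectedness.

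The one genuine gap is in your verification of the second (maximality) property. You argue that if a rational curve $D$ meets a general fiber $F$ in a nonempty finite set, then the points of $D$ are chain-connected to points of $F$, and ``by maximality of the reachable sets this forces $D\subset F$.'' But your construction identifies the fibers only with the closures of the reachable sets generated by a \emph{fixed} proper family $\mathcal{C}\to T$ of chains, for $x$ in a dense open set; it does not yet show that a fiber is the \emph{full} equivalence class under arbitrary chains of rational curves, and a curve $D$ belonging to a family not captured by $\mathcal{C}\to T$ could a priori join distinct fibers. Your sentence assumes away exactly this possibility, so as written the argument is circular. The standard repair is a countability argument: the scheme parametrizing rational curves on $X$ has only countably many irreducible components; for any component whose curves meet the general fiber in a nonempty finite set and sweep out a dense subset, adjoining those curves to the chain relation would strictly increase the generic dimension $d$ of the reachable sets, contradicting its maximality; hence each such component can only interfere with the fibers over a proper closed subset of $Z$, and removing the countable union of these loci produces the ``sufficiently general'' $z$ of the statement. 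This is also why the conclusion is asserted only for sufficiently general (i.e., very general) $z$ rather than for all $z$ in a Zariski-open subset, a distinction your sketch does not engage with.
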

The morphism $f$ is called the {\it maximally rationally connected fibration} of $X$ (or {\it MRC fibration} for short). \par
Equivalently, one can think of $f$ as being a dominant rational map which
is holomorphic and proper on a dense open subset of $X$ (aka ``almost
holomorphic"). Its general fiber is rationally connected. Moreover, the fibers of $f$ are maximal in the sense that $f$ factors
through any other map with rationally connected fibers.\par
Furthermore, we may assume that the base $Z$ is smooth, because if it is not, we
can compose $f$ with a birational map $Z \dashrightarrow Z'$ which resolves the
singularities of $Z$. The rational map $X \dashrightarrow Z'$ still has all the properties of
the original $f$. In general, it is clear that the MRC fibration is well-defined only up to birational equivalence.\par
Finally, it will be central to our argument that the base $Z$ of the MRC fibration is not uniruled. This statement can be obtained as an immediate corollary (\cite[Corollary 1.3]{GHS}) to the following theorem of Graber-Harris-Starr (\cite[Theorem 1.1]{GHS}).\begin{theorem}
Let $f:X\to B$ be a proper morphism of complex varieties with $B$ a smooth curve. If the general fiber of $f$ is rationally connected, then $f$ has a section.
\end{theorem}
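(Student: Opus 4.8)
The plan is to reduce the problem to a statement over the function field and then to build the section geometrically out of rational curves. After replacing $X$ by a resolution and compactifying, I would assume that $X$ is smooth and projective, that $B$ is a smooth projective curve, and that $f$ is flat with smooth generic fiber $X_\eta$ over $K:=\CC(B)$. Since $f$ is proper and $B$ is a smooth curve, so that its local rings are discrete valuation rings, the valuative criterion of properness shows that a section defined over a dense open subset of $B$ extends uniquely to a section over all of $B$. Thus it suffices to produce a section over the generic point, i.e.\ a $K$-rational point of $X_\eta$; equivalently, it suffices to exhibit an irreducible curve $\Sigma\subset X$ mapping isomorphically onto $B$.

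First I would produce a multisection: intersecting $X$ with sufficiently many general hyperplanes yields an irreducible curve $D\subset X$ that is finite of some degree $d\geq 1$ over $B$. If $d=1$ we are done, so the entire difficulty is to pass from an arbitrary multisection to a genuine section. The mechanism I would use is the comb construction. Using that the general fiber is rationally connected, the theorem of Koll\'ar--Miyaoka--Mori provides very free rational curves through general points of a general fiber. I would therefore choose general points $p_1,\dots,p_m$ on $D$, lying in distinct general fibers, and attach to $D$ at each $p_i$ a very free rational curve $T_i$ contained in that fiber, forming a nodal comb $C=D\cup T_1\cup\dots\cup T_m$ with handle $D$ and teeth $T_i$.

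The heart of the argument is the deformation theory of this comb. The very freeness of the teeth forces the restriction of the tangent bundle $T_X$, equivalently the normal sheaf $N_{C/X}$, to be sufficiently positive along $C$: after the appropriate bookkeeping at the attaching nodes one arranges $H^1(C,N_{C/X})=0$, so that the deformations of $C$ in $X$ are unobstructed, $C$ smooths, and one gains a large family of deformations satisfying prescribed incidence conditions. The aim is then to arrange, within this family and its limits in the space of stable maps, a member that contains a section of $f$ — the teeth supplying exactly the positivity needed to trade a degree-$d$ multisection for a section together with fibral or contracted components.

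I expect the main obstacle to be precisely this smoothing-and-specialization step: quantifying how much positivity the teeth must supply so that the relevant $H^1$ vanishes, verifying that enough very free curves are available in the generic fiber (a delicate point, since very freeness must hold over the function field $K$ and not merely at closed points), and controlling the limit in the Kontsevich space so that a genuine section component splits off rather than, say, another multisection of the same degree decorated with rational tails. By comparison, the easier inputs—the existence of a multisection, the valuative extension of a section defined over a dense open subset, and the mere existence of rational curves in the fibers—are routine; the full technical weight of the theorem sits in the positivity estimates and the combinatorics of the comb smoothing.
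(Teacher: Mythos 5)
This statement is not proved in the paper at all: it is the theorem of Graber--Harris--Starr, quoted verbatim from \cite[Theorem 1.1]{GHS} and used as a black box (via \cite[Corollary 1.3]{GHS}) to conclude that the base of the MRC fibration is not uniruled, hence has pseudo-effective canonical bundle by \cite{BDPP_13}. So there is no in-paper proof to compare against, and your attempt has to be judged as a proof of the Graber--Harris--Starr theorem itself.

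Your outline does identify the strategy of the actual proof --- reduce to producing a rational section, take a multisection $D$ of some degree $d$, attach very free teeth in fibers to form a comb, and study deformations of the comb --- but the proof stops exactly where the theorem begins. The steps you carry out (resolving and compactifying, extending a section over a dense open subset of $B$ by the valuative criterion, producing a multisection by hyperplane sections, and invoking the existence of very free curves in the rationally connected fibers) are all routine. The essential difficulty is the one you defer: any \emph{smoothing} of the comb $C=D\cup T_1\cup\dots\cup T_m$ still satisfies $f_*[C']=d[B]$, since the teeth are contracted by $f$; so no amount of unobstructedness or $H^1$-vanishing can deform the comb into a section. A section can only appear as a component of a \emph{reducible} limit in the space of stable maps, and forcing such a degeneration --- rather than, as you yourself note, another degree-$d$ multisection with rational tails --- is the entire content of Graber--Harris--Starr. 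Their argument requires the quantitative ``flexible comb'' lemma (for $m\gg 0$ there is a subcomb through $D$ with many teeth whose deformations pass through prescribed general points of prescribed fibers with the relevant twisted $H^1$ vanishing), followed by a delicate analysis of the boundary of the corresponding component of the Kontsevich space, imposing point conditions in a single fiber to make the limiting stable map break off a component mapping isomorphically to $B$. None of this mechanism appears in your sketch; as written, the proposal is a correct plan with the decisive step asserted as an ``aim,'' i.e., a genuine gap rather than a proof.
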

It is then a consequence of \cite{BDPP_13} that the canonical line bundle $K_Z$ of the base $Z$ is pseudo-effective.

\section{Proof of Theorems \ref{mthm_k_ricci} and \ref{mthm_hsc}}\label{proof_section}
In this section, we prove Theorems \ref{mthm_k_ricci} and \ref{mthm_hsc}. The proofs are initially the same, but there are some subtle differences towards the end in how the contradiction is obtained. In particular,  the use of the positivity condition in the proof of Theorem \ref{mthm_k_ricci} is a matter of linear algebra, whereas the proof of Theorem \ref{mthm_hsc} requires a lemma of Berger and a global argument involving the Divergence Theorem.

\subsection{Proof of Theorem \ref{mthm_k_ricci}} 
We start with some general observations about MRC fibrations for which we could not find a reference in the literature. We thus hope that this part is of independent interest for the overall understanding of this important map.\par

Let $V\subset M$ denote the indeterminacy locus of our MRC fibration $f$. On $M\backslash V$, the pullback of the tangent bundle of $Z$, denoted $f^*TZ$, is a rank $m:=\dim Z$ holomorphic vector bundle. Since the codimension of $V$ is at least two, this vector bundle can be extended to all of $M$ as a reflexive sheaf in a unique way, and we denote this extension with the symbol $E$. The canonical line bundle $K_Z$ and its dual $K_Z^*$ can be pulled back under $f$ and extended to all of $M$ as line bundles. We denote these extensions by $f^*K_Z$ and $f^*K_Z^*$ and note that they agree with $\det E^*$ and $\det E$, respectively. \par 
On $M\backslash V$, there is an exact sequence of coherent sheaves
$$ 0\to T_{(M\backslash V/f(M\backslash V))} \to TM \stackrel{df}{\to} f^*TZ\to N\to 0,$$
where $N$ is a coherent sheaf supported on the locus $B$ where $f: M\backslash V\to f(M\backslash V)$ is not smooth (see \cite[Definition 3.4.5]{Sernesi_book}). Let $W=V\cup B$.\par
We will now prove that $$\int_{M} c_1(E) \wedge \omega^{n-1}$$
is non-positive, based on the following proposition. The proof of our theorems will then be finished by establishing that this integral can also be shown to be positive under the assumption that generic fibers of the MRC fibration are of dimension no greater than $n-\kappa$ and, respectively,\ that these fibers are of dimension no greater than $\rr-1$. We suspect the statement of the proposition is known to experts, but for lack of a suitable reference, we provide a proof.
\begin{proposition}\label{pullback_pseff}
Let $\nu:X\dashrightarrow Y$ be a dominant rational map between complex projective manifolds $X,Y$. Let $L$ be a pseudo-effective line bundle on $Y$. Then the pull-back line bundle $\nu^*L$ is a pseudo-effective line bundle on $X$.
\end{proposition}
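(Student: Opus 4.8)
The plan is to reduce the statement about a rational map to a statement about a morphism, by resolving the indeterminacy of $\nu$. Concretely, I would take a common resolution
\begin{equation*}
\begin{CD}
\widetilde{X} @>\pi>> X\\
@V\mu VV\\
Y
\end{CD}
\end{equation*}
where $\pi:\widetilde X\to X$ is a birational morphism (e.g. a composition of blow-ups along smooth centers lying over the indeterminacy locus of $\nu$) such that the induced rational map $\mu=\nu\circ\pi:\widetilde X\to Y$ is in fact a holomorphic morphism. This is possible by Hironaka's resolution of indeterminacy, and it is harmless because pseudo-effectivity is a birational notion that I can transport back and forth along $\pi$.

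The next step is to handle the morphism case, i.e.\ to show that if $L$ is pseudo-effective on $Y$ and $\mu:\widetilde X\to Y$ is a surjective holomorphic map of projective manifolds, then $\mu^*L$ is pseudo-effective on $\widetilde X$. Here I would use the characterization of pseudo-effectivity in terms of closed positive currents: $L$ is pseudo-effective precisely when $c_1(L)$ is represented by a closed positive $(1,1)$-current $T$ (equivalently, $c_1(L)$ lies in the closure of the cone of effective divisor classes). Since $\mu$ is a holomorphic map, the pullback $\mu^*T$ of such a current is well-defined as a closed positive current representing $c_1(\mu^*L)$, which gives pseudo-effectivity of $\mu^*L$. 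Alternatively, and perhaps more cleanly for the projective setting, I could argue via a limit of effective $\QQ$-divisors: write $c_1(L)=\lim_j [D_j]$ with $D_j$ effective $\QQ$-divisors in the appropriate closure, and note that since $\mu$ is surjective the pullbacks $\mu^* D_j$ are again effective, so their classes $\mu^* c_1(L)=c_1(\mu^*L)$ land in the closed pseudo-effective cone of $\widetilde X$.

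Finally I would descend the conclusion from $\widetilde X$ back to $X$ along $\pi$. Since $\pi$ is a birational morphism of projective manifolds and $\mu^*L=(\nu\circ\pi)^*L=\pi^*(\nu^*L)$ (as line bundles away from the exceptional and indeterminacy loci, extended as in the construction of $E$ above), pseudo-effectivity of $\pi^*(\nu^*L)$ forces pseudo-effectivity of $\nu^*L$: pushing forward a closed positive current representing $c_1(\pi^*(\nu^*L))$ under the birational morphism $\pi$ yields a closed positive current representing $c_1(\nu^*L)$, because $\pi_*$ preserves closed positive currents and commutes with taking Chern classes of pullbacks.

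I expect the main technical obstacle to be the pullback of currents under a merely surjective (possibly non-smooth) morphism in the middle step, since pulling back a closed positive $(1,1)$-current is delicate when the map has positive-dimensional fibers or critical values. I would circumvent this by preferring the effective-cone formulation over the current formulation wherever possible: approximating $c_1(L)$ by classes of effective divisors and pulling those back as honest effective cycles avoids any current-theoretic subtlety, since the scheme-theoretic preimage of an effective divisor under a dominant morphism between varieties of the same or larger source dimension is again effective. The only care needed is to ensure that the approximating classes can be taken with the $(1,1)$-cohomological pullback commuting with $\mu^*$, which holds by functoriality of $c_1$ and $\mu^*$ on $H^{1,1}$.
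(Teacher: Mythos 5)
Your proposal is correct and follows essentially the same route as the paper: resolve the indeterminacy with a birational morphism, pull back along the resulting holomorphic map using the description of the pseudo-effective cone as the closure of the cone of effective classes, and descend via pushforward along the birational morphism. Your preferred divisor-class formulation (avoiding pullbacks of currents) is precisely the one the paper uses.
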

\begin{proof}
Among the various equivalent definitions of pseudo-effectivity for a line bundle (see \cite{Dem}), a convenient one to use in this context is the following. The line bundle $L$ is pseudo-effective if its numerical class lies in the pseudo-effective cone $\PEff(Y)$, i.e., in the closure of the convex cone generated by Chern classes of effective line bundles in the real N\'eron-Severi vector space $N^1(X)_\RR$.\par
Furthermore, we observe that for an arbitrary holomorphic map $\nu':X'\to Y$ from a complex projective manifold $X'$, the pullback map $\nu'^*:N^1(Y)_\RR\to N^1(X')_\RR$ is an injective linear map of finite dimensional real vector spaces. Moreover, since $\PEff(Y)$ is generated by the classes of line bundles with a section, there is an induced injective map $\nu'^*:\PEff(Y)\to \PEff(X')$. \par
Now, let $\sigma: X'\to X$ be a birational holomorphic map such that the composition $\nu':=\nu\circ \sigma :X'\to Y$ is a holomorphic map. Due to the above remarks, $\nu'^* L$ is a pseudo-effective line bundle on $X'$. Furthermore, the pushforward $\sigma_*$ of divisors induces a linear map $\sigma_*:N^1(X')_\RR\to N^1(X)_\RR$. Since the pushforward of an effective divisor is still effective (or zero), we get an induced map $\sigma_*: \PEff(X')\to \PEff(X)$. Since $\nu^*=\sigma_* \circ \nu'^*$, our claim is proven.
\end{proof}
In \cite[Section 2]{heier_wong_cag}, a linear algebra argument was given for the fact that any pseudo-effective line bundle $P$ on $M$ satisfies
$$ \int_M c_1(P) \wedge \omega ^{n-1} \geq 0.$$
Alternatively, the non-negativity of this integral can be justified by arguing that it holds if the numerical class of $P$ is a positive scalar multiple of the numerical class of an effective line bundle. The non-negativity is then preserved when taking limit.\par
Since 
$$\det E = f^*K_Z^*,$$
we have $c_1(E)=-c_1(f^*K_Z)$. Also, by Proposition \ref{pullback_pseff}, the pseudo-effectivity of $K_Z$ implies the pseudo-effectivity of $f^*K_Z$, and we obtain the desired inequality as follows.
\begin{align}
~& \int_M c_1(E) \wedge \omega ^{n-1}\nonumber\\
=& \int_M c_1(f^*K_Z^*) \wedge \omega ^{n-1}\nonumber\\
=& -\int_M c_1(f^*K_Z) \wedge \omega ^{n-1}\nonumber\\
\leq &\ 0.\nonumber
\end{align}\par
In the rest of this section, we shall infer that the value of the above integral is positive under the assumptions of Theorems \ref{mthm_k_ricci} and \ref{mthm_hsc}, respectively, resulting in contradictions. Our argument is based on the well-known fact that, as a quotient bundle of $TM$ over $M\backslash W$, the vector bundle $E|_{M\backslash W}$ carries an induced hermitian metric $h$ whose curvature is equal to or more positive than that of $g$ on $TM$ over $M\backslash W$.\par
To be precise, let us recall the following standard setup. Locally on $M\backslash W$, we choose a unitary frame $e_1,\ldots,e_n$ for $TM$ such that $e_1,\ldots,e_{n-m}$ is a frame for $S:=T_{(M\backslash V/f(M\backslash V))}$.  The connection matrix for the metric connection on $TM$ is
$$\theta_{TM}= 
\left(\begin{matrix}
                \theta_S&\bar A^T\\
                A& \theta_E
\end{matrix}\right),
$$ 
where $\theta_S,\theta_E$ are the respective connection matrices for $S$ and $E$, and $A\in A^{1,0}(\Hom(S,E))$ is the second fundamental form of $S$ in $TM$. Now, the curvature matrix in terms of the unitary frame $e_1,\ldots,e_n$ is 
$$ \Theta_{TM}= 
\left(\begin{matrix}
                d\theta_S-\theta_S\wedge \theta_S -\bar A^T\wedge A & * \\
                *&d\theta_E-\theta_E\wedge \theta_E-A\wedge \bar A^T 
\end{matrix}\right),
$$
which implies that $\Theta_{E} = \Theta_{TM}|_E + A\wedge \bar A^T$ and, in particular,
\begin{equation}\label{curv_incr}
\Theta_{E} \geq  \Theta_{TM}|_E.
\end{equation}
We let the $(1,1)$-form $\eta$ be the trace of the matrix $\Theta_{E}$ over $M\backslash W$, i.e.,
$$\eta = \sum_{\alpha=1}^m (\Theta_{E})_{\alpha\alpha}.$$
Let $\tilde h$ be an arbitrary smooth hermitian metric on $\det E$ over the entire $M$ with curvature form $\tilde \eta$. If $\tau$ is a local nowhere zero holomorphic section of $\det E$, the ratio
$$q=\frac{(\det h)(\tau,\tau)}{\tilde h(\tau,\tau)}$$
is independent of the choice of $\tau$ and constitutes a smooth positive function on $M\backslash W$. Over that same set, we have the following relationship:
$$\tilde \eta = \eta + \partial\bar\partial \log q.$$\par
By standard techniques from the theory of resolution of singularities and the removal of indeterminacy, there is a compact complex manifold $M^*$ and a surjective holomorphic map $\rho:M^*\to M$ such that $$\rho|_{M^*\backslash \rho^{-1} (W)}:M^*\backslash \rho^{-1} (W) \to M\backslash W$$
is biholomorphic, the total transform $\rho^{*}(W)$ is a divisor with simple normal crossing support in $M^*$, and $f\circ \rho$ is holomorphic. With positive integers $a_i,b_j$, write
$$\rho^{*}(W) = \sum_{i\in I}  a_i D^{(1)}_i + \sum_{j\in J} b_j D^{(2)}_j,$$
where the $D^{(1)}_i$ are the irreducible components of $\rho^{*}(W)$ such that $\rho(D^{(1)}_i)$ has codimension one and the $D^{(2)}_j$ are the irreducible components of $\rho^{*}(W)$ such that $\rho(D^{(2)}_j)$ has codimension at least two.
\par
Furthermore,
\begin{align}
\int_{M} c_1(E) \wedge \omega^{n-1}=&\int_{M}\frac{\sqrt{-1}}{2\pi} \tilde \eta \wedge \omega^{n-1}\nonumber \\ 
=& \int_{M\backslash W}\frac{\sqrt{-1}}{2\pi}\tilde  \eta \wedge \omega^{n-1}\label{remove_W}\\
=& \int_{M\backslash W}\frac{\sqrt{-1}}{2\pi}\eta \wedge \omega^{n-1}+ \int_{M\backslash W}\frac{\sqrt{-1}}{2\pi}\partial\bar\partial \log q  \wedge \omega^{n-1} .\label{split_key_integral_in_two}
\end{align}
The equality \eqref{remove_W} holds because $\tilde \eta$ is a smooth $(1,1)$-form and the removal of a proper closed subset does not affect the value of the integral.\par
We proceed by showing that the second summand in \eqref{split_key_integral_in_two}, i.e.,
\begin{align}
\int_{M\backslash W}\frac{\sqrt{-1}}{2\pi}\partial\bar\partial \log q  \wedge \omega^{n-1}=& \int_{M^*\backslash \rho^{-1} (W)}\frac{\sqrt{-1}}{2\pi}\partial\bar\partial \log \rho^* q  \wedge \rho^* \omega^{n-1},\nonumber
\end{align}
is a non-negative number. To be more precise, we will see that this integral is always non-negative and, additionally, positive if and only if there exists a divisor along which $f$ is not smooth. The reason is the following chain of equalities.
\begin{align}
 &\ \int_{M^*\backslash \rho^{-1} (W)}\frac{\sqrt{-1}}{2\pi}\partial\bar\partial \log \rho^* q  \wedge \rho^* \omega^{n-1}\nonumber\\ =&\ \sum_{i\in I} a_i \int_{D^{(1)}_i} \rho^* \omega^{n-1} +\sum_{j\in J} b_j \int_{D^{(2)}_j} \rho^* \omega^{n-1} \label{PL} \\
 =&\ \sum_{i\in I} a_i \int_{\rho(D^{(1)}_i)} \omega^{n-1} +\sum_{j\in J} b_j \int_{\rho(D^{(2)}_j)} \omega^{n-1} \nonumber\\
 =&\ \sum_{i\in I} a_i \int_{\rho(D^{(1)}_i)} \omega^{n-1} + 0 \label{codim_2_gives_0} 
\end{align}
Note that equality \eqref{PL} is due to the Poincar\'e-Lelong equation and equality \eqref{codim_2_gives_0} is due to the fact that $\dim \rho(D^{(2)}_j) \leq n-2$. Moreover, if $I=\emptyset$, then the value of \eqref{codim_2_gives_0} is zero. If $I\not =\emptyset$, then the value of \eqref{codim_2_gives_0} is positive.\par
We now observe that on $M\backslash W$:
\begin{align}&\ \sqrt{-1}\eta\wedge \omega^{n-1} \nonumber\\ =&\ \sqrt{-1} \sum_{\alpha=1}^m (\Theta_{E})_{\alpha\alpha}\wedge \omega^{n-1}\nonumber\\
\geq &\ \sqrt{-1} \sum_{\alpha=1}^m (\Theta_{TM})_{(n-m+\alpha)\, (n-m+\alpha)}\wedge \omega^{n-1}\label{curv_incr_invoke}\\
= &\ \sqrt{-1} \sum_{\alpha=1}^m (\Theta_{TM})_{e_{(n-m+\alpha)}\,\bar e_{(n-m+\alpha)}}\wedge \omega^{n-1}\nonumber\\
=&\ \sqrt{-1} \sum_{\alpha=1}^m\sum_{i,j=1}^n R(\frac{\partial}{\partial z_i},\frac{\partial}{\partial \bar z_j},e_{(n-m+\alpha)},\bar e_{(n-m+\alpha)}) dz_i\wedge d\bar{z}_{{j}}\wedge \omega^{n-1}\nonumber\\
=&\ \sqrt{-1} \sum_{\alpha=1}^m \sum_{i,j=1}^n R(e_{(n-m+\alpha)},\bar e_{(n-m+\alpha)},\frac{\partial}{\partial z_i},\frac{\partial}{\partial \bar z_j}) dz_i\wedge d\bar{z}_{{j}}\wedge \omega^{n-1}\label{kaehler_symm}\\
=&\ \frac 2 n \left(\sum_{\alpha=1}^m Ric(e_{(n-m+\alpha)},\bar e_{(n-m+\alpha)})\right) \omega^{n}\label{application_of_trace_formula}
\end{align}
The inequality in \eqref{curv_incr_invoke} is due to the curvature increasing property \eqref{curv_incr}. The equality \eqref{kaehler_symm} is due to the K\"ahler symmetry of the curvature $4$-tensor, and equality \eqref{application_of_trace_formula} is due to the trace formula. \par

To obtain the desired contradiction, let us assume that a general fiber of the MRC fibration is of dimension no greater than $n-\kappa$. This condition is clearly equivalent to $m \geq \kappa$. The technical reason behind our argument is the theory of minimax formulae and extremal partial traces for the eigenvalues of hermitian matrices. For a nice account of this material we refer to \cite[Section 1.3.2]{Tao_book}. In a nutshell, the key point is the following.\par
Given an $n\times n$ matrix $T$ and an $m$-dimensional subspace $S\subset \CC^n$, one can define the {\it partial trace} of $T$ with respect to $S$ and a fixed hermitian inner product to be the expression 
$$\tr_S T := \sum_{i=1}^m v_i^*Tv_i,$$
where $v_1,\ldots,v_m$ is any unitary basis of $S$. We simply write $\tr T$ for $\tr_{\CC^n} T$. The displayed expression is easily seen to be independent of the choice of the unitary basis and thus well-defined. If we assume that $T$ is hermitian and let $\lambda_1\geq\ldots\geq \lambda_n$ be the eigenvalues of $T$, then for any $1\leq m\leq n$, one has
$$\lambda_1+\ldots+\lambda_m = \sup_{\dim(S)=m} \tr_S T$$
and
\begin{equation}\lambda_{n-m+1}+\ldots+\lambda_n = \inf_{\dim(S)=m} \tr_S T.\label{inf_formula}\end{equation}\par
Now, the sum $\sum_{\alpha=1}^m Ric(e_{(n-m+\alpha)},\bar e_{(n-m+\alpha)})$ is the partial trace of the hermitian $n \times n$ matrix $R_{i\bar j}$ with respect to $S=\spann \{e_{(n-m+1)},\ldots, e_{n}\}$. Thus, according to \eqref{inf_formula}, the expression \eqref{application_of_trace_formula} is bounded below by
\begin{align}
&\ \frac 2 n \left(\sum_{\alpha=1}^m \tau_{n-m+\alpha}\right)\omega^{n}\label{extremal_partial_trace},
\end{align}
where $\tau_{n-m+1},\ldots,\tau_{n}$ denote the $m$ smallest eigenvalues of the hermitian $n \times n$ matrix $R_{i\bar j}$. Due to $m \geq \kappa$ and the assumption of $\kappa$-positivity, $\sum_{\alpha=1}^m \tau_{n-m+\alpha}$ is positive everywhere on $M$, and the expression \eqref{extremal_partial_trace} is therefore bounded below by $\varepsilon \omega^{n}$ for some positive number $\varepsilon$.
To conclude the proof, we observe that altogether
\begin{align}
~&\ \int_M c_1(E) \wedge \omega ^{n-1}\nonumber\\
\geq &\ \int_{M\backslash W} \frac{\sqrt{-1}}{2\pi}\eta\wedge \omega^{n-1}\label{due_to_nonneg_of_summand}\\
\geq &\ \frac{1}{2\pi}  \int_{M\backslash W} \varepsilon \omega ^{n}\nonumber\\
> &\ 0\nonumber.
\end{align}
Note that the inequality \eqref{due_to_nonneg_of_summand} is due to the non-negativity of the second summand in \eqref{split_key_integral_in_two}.
We have obtained the desired contradiction to complete the proof of Theorem \ref{mthm_k_ricci}.
\subsection{Proof of Theorem \ref{mthm_hsc}}Let us assume that a general fiber of the MRC fibration is of dimension no greater than  $\rr-1$. This condition is clearly equivalent to $m \geq n-\rr+1$. We enter the proof of Theorem \ref{mthm_k_ricci} at the point where it was determined that it remained to prove the positivity of 
\begin{align}
\int_{M\backslash W} \frac{\sqrt{-1}}{2\pi}\eta\wedge \omega^{n-1}\label{key_integral}
\end{align}
in order to obtain a contradiction. \par
Now, we recall that the set $W$ is the union of the indeterminacy locus $V$ of the MRC fibration $f$ and the locus where the map does not have full rank. Since $f$ is almost holomorphic and due to the standard generic smoothness property of holomorphic maps, the set $W$ does not intersect a generic fiber of $f$. Thus, there is a dense Zariski-open subset $Z'\subset Z$ such that 
\begin{align} f|_{f^{-1}(Z')}:f^{-1}(Z')\to Z' \nonumber\end{align}
is a holomorphic submersion. Moreover, since the integrand in \eqref{key_integral} is a smooth form, the domain of integration can be replaced with the dense subset $f^{-1}(Z')$. Thus, it suffices to take an arbitrary but fixed small open set $\VV\subset Z'$ and prove that 
\begin{align}
\int_{f^{-1}(\VV)} \frac{\sqrt{-1}}{2\pi}\eta\wedge \omega^{n-1}\label{key_integral_over_inverse_of_small_open}
\end{align}
is non-negative in general and positive for a certain choice of $\VV$. \par

Let $w_1,\ldots,w_m$ be local coordinates on a given $\VV$. For each $q\in \VV$, we write $M_q:=f^{-1}(q)$ for the fiber over $q$. For each $p\in f^{-1}(\VV)$, we can choose tangent vectors
$$\left\{\widetilde{ \frac \partial {\partial w_1}},\ldots,\widetilde{ \frac \partial {\partial w_m}}\right\}$$
forming a smooth frame for $((T_{p}M_{f(p)})^\perp)_{p\in f^{-1}(\VV)}$ and having the property that 
$$df_p\left(\widetilde{ \frac \partial {\partial w_i}}\right) =  \frac \partial {\partial w_i} \quad (i=1,\ldots,m).$$\par
In a small neighborhood $U$ of an arbitrary but fixed $p\in M_q$, we can choose holomorphic coordinate functions $z_1,\ldots,z_{n-m}$ such that for all $p'\in U$:
$$T_{p'}M_{f(p')}=\spann \left\{\frac \partial {\partial z_1},\ldots,\frac \partial {\partial z_{n-m}}\right\}.$$
We write 
$$ \phi_{1}=\frac \partial {\partial z_1}^*, \ldots, \phi_{n-m}=\frac \partial {\partial z_{n-m}}^*,\phi_{n-m+1}=\widetilde{ \frac \partial {\partial w_1}}^*,\ldots,\phi_{n}=\widetilde{ \frac \partial {\partial w_m}}^*$$ for the dual fields of one-forms.\par
Let $h$ continue to denote the induced hermitian metric on $E|_{M\backslash W}$. Locally, we write
$$\eta = \sum_{i,j=1}^n \eta_{i\bar j} \phi_i\wedge \bar{\phi}_j.$$
If we denote by $s_E$ the trace of $\eta$ with respect to $g$, we can rewrite \eqref{key_integral_over_inverse_of_small_open} by the trace formula as
\begin{align}
\int_{f^{-1}(\VV)} \frac{\sqrt{-1}}{2\pi}\eta\wedge \omega^{n-1}= \int_{f^{-1}(\VV)} \frac{1}{n \pi} s_E\omega^{n}.\nonumber
\end{align}\par
Returning to the above-described general linear algebra setting for a moment, observe that for any subspace $S$ and its orthogonal complement $S^\perp$, the trace $\tr T$ satisfies
$$\tr T = \tr_{S^\perp} T + \tr_{S} T.$$\par
If we apply this with $S=T_{p'}M_{f(p')}$, then we obtain
$$s_E=K_1+K_2,$$
where 
$$K_2=\sum_{i,j=1}^{n-m} g^{i\bar j}\eta_{i\bar j}= - \sum_{i,j=1}^{n-m} g^{i\bar j}\frac{\partial ^2 \log \det h}{\partial z_i\partial\bar{z}_j},$$
and $K_1$ is the partial trace of $\eta$ with respect to $S^\perp$ and $g$. Due to the curvature increasing property, $K_1$ is bounded from below by the scalar curvature of $g$ with respect to the plane $S^\perp$, which we refer to as $\hat K_1$. Due to \cite[Lemme 7.4]{Berger}, $\hat K_1$ can be expressed as a positive constant times the integral of $H$ over the unit sphere of vectors in $S^\perp$. Due to the overall semi-positivity of $H$, we can conclude $\hat K_1 \geq 0$. Furthermore, at a point $p$ with $n-\mathfrak{n}(p)=\rr$, due to 
$$\dim S^\perp=m \geq n-\rr+1=\mathfrak{n}(p)+1,$$
we actually have $\hat K_1(p) > 0$. Note that due to the openness of the set $\{p\in M\ |\ n-\mathfrak{n}(p)=\rr\}$, there exists a $\VV$ with 
$$\{p\in M\ |\ n-\mathfrak{n}(p)=\rr\} \cap f^{-1}(\VV) \not = \emptyset.$$
We can thus conclude $$\int_{f^{-1}(\VV)}K_1\omega^n \geq \int_{f^{-1}(\VV)}\hat K_1\omega^n > 0,$$ 
and it remains to show that the integral $\int_{f^{-1}(\VV)}K_2\omega^n$ is non-negative. A direct computation yields (we write $\hbar$ for $\det h$)
\begin{align}
K_2=& - \sum_{i,j=1}^{n-m} g^{i\bar j}\frac{\partial ^2 \log \hbar}{\partial z_i\partial\bar{z}_j}\nonumber\\
=& - \sum_{i,j=1}^{n-m} \frac{g^{i\bar j}}{\hbar} \frac{\partial ^2 \hbar}{\partial z_i\partial\bar{z}_j}+\sum_{i,j=1}^{n-m} \frac{g^{i\bar j}}{\hbar^2} \frac{\partial  \hbar}{\partial z_i}\frac{\partial  \hbar}{\partial\bar{z}_j}\nonumber\\
=& \frac{\Delta' \hbar}{\hbar} + \sum_{i,j=1}^{n-m} \frac{g^{i\bar j}}{\hbar^2} \frac{\partial \hbar}{\partial z_i}\frac{\partial \hbar}{\partial\bar{z}_j},\label{pos_part}
\end{align}
where $\Delta' $ is the Laplacian operator on the fibers with respect to the restriction of the metric $g$ to the fibers. Since the second summand in \eqref{pos_part} is always non-negative, we are done if we can prove that
$$\int_{f^{-1}(\VV)} \frac{\Delta' \hbar}{\hbar}  \omega^n = 0.$$\par
In order to do this, note that the normal bundle of a general fiber is the trivial bundle of rank $m$. Therefore, we can regard $\hbar$ as global smooth function of any given general fiber. We can rewrite 
$$\frac{\omega^n}{\hbar}=\omega' \wedge\phi_{n-m+1}\wedge\bar \phi_{n-m+1}\wedge\ldots  \wedge\phi_{n}\wedge\bar \phi_{n},$$
where $\omega'$ is the volume form of the restriction of the metric $g$ on the fibers. Applying the Fubini Theorem of iterated integrals, we have
$$\int_{f^{-1}(\VV)} \frac{\Delta' \hbar}{\hbar}  \omega^n = \int_\VV\left( \int_{f^{-1}(w)} (\Delta' \hbar) \omega'\right)dw _1\wedge d\bar w _1\wedge\ldots\wedge dw _m\wedge d\bar w _m.$$
It follows from the Divergence Theorem that on the compact manifold $f^{-1}(w)$:
$$ \int_{f^{-1}(w)} (\Delta' \hbar) \omega' =0$$
for all $w\in \VV$. This finishes the proof.

\end{document}